\DeclareMathOperator{\A}{A}
\DeclareMathOperator{\D}{D}
\DeclareMathOperator{\F}{F}
\DeclareMathOperator{\R}{R}
\DeclareMathOperator{\U}{U}
\DeclareMathOperator{\id}{1'}
\DeclareMathOperator{\down}{\downarrow}
\DeclareMathOperator{\maxit}{^\uparrow}
\DeclareMathOperator{\pref}{\sqcup}
\DeclarePairedDelimiter\abs{\lvert}{\rvert}
\newcommand{\compo}{\mathbin{;}}
\newcommand{\compl}[1]{\overline{#1}}
\newcommand{\bmeet}{\wedge}
\newcommand{\bjoin}{\vee}
\newcommand{\defn}[1]{\textbf{#1}}
\newcommand{\algebra}[1]{\mathfrak{#1}}
\theoremstyle{plain}
\newtheorem{theorem}{Theorem}[section]
\newtheorem{proposition}[theorem]{Proposition}
\newtheorem{lemma}[theorem]{Lemma}
\newtheorem{corollary}[theorem]{Corollary}
\theoremstyle{definition}
\newtheorem{definition}[theorem]{Definition}
\newtheorem{example}[theorem]{Example}
\title[The Finite Representation Property]{The Finite Representation Property for Composition, Intersection, Domain and Range}
\author{Brett McLean}
\address{Department of Computer Science, University College London, Gower Street, London WC1E 6BT}
\email{b.mclean@cs.ucl.ac.uk}
\thanks{The first author would like to thank his PhD supervisor Robin Hirsch for many helpful discussions.}
\author{Szabolcs Mikul\'as}
\address{Department of Computer Science and Information Systems, Birkbeck, University of London, Malet Street, London WC1E 7HX}
\email{szabolcs@dcs.bbk.ac.uk}
\begin{document}

\begin{abstract}
We prove that the finite representation property holds for representation by partial functions for the signature consisting of composition, intersection, domain and range and for any expansion of this signature by the antidomain, fixset, preferential union, maximum iterate and opposite operations. The proof shows that, for all these signatures, the size of base required is bounded by a double-exponential function of the size of the algebra. This establishes that representability of finite algebras is decidable for all these signatures. We also give an example of a signature for which the finite representation property fails to hold for representation by partial functions.
\end{abstract}

\maketitle

\section{Introduction}

The investigation of the abstract algebraic properties of partial functions involves studying the isomorphism class of algebras whose elements are partial functions and whose operations are some specified set of operations on partial functions---operations such as composition or intersection, for example. We refer to an algebra isomorphic to an algebra of partial functions as representable.

One of the primary aims is to determine how simply the class of representable algebras can be axiomatised and to find such an axiomatisation. Often, the representation classes have turned out to be axiomatisable by finitely many equations or quasi-equations \cite{schein, invitation, Jackson2003393, 1182.20058, DBLP:journals/ijac/JacksonS11, hirsch}.

Another question to ask is whether every finite representable algebra can be represented by partial functions on some finite set. Interest in this so-called finite representation property originates from its potential to help prove decidability of representability, which in turn can help give decidability of the equational or universal theories of the representation class.

Recently, Hirsch, Jackson and Mikul{\'a}s established the finite representation property for many signatures, but they leave the case for signatures containing the intersection, domain and range operations together open \cite{hirsch}.

In this paper we prove the finite representation property for the most significant group of outstanding signatures, which includes a signature containing all the most commonly considered operations on partial functions. From our proof we obtain a double-exponential bound on the size of base set required for a representation. It follows as a corollary that representability of finite algebras is decidable for all these signatures. As an additional observation, we give an example showing that there are signatures for which the finite representation property does not hold for representation by partial functions.

The results presented here originate with McLean \cite{McLean-arxiv}. The contribution of the second author is to translate the original proof of the finite representation property into a semantical setting, so that the presence of antidomain is not necessary.

\section{Algebras of Partial Functions}

In this section we give the fundamental definitions that are needed in order to state the results contained in this paper.

Given an algebra $\algebra{A}$, when we write $a \in \algebra{A}$ or say that $a$ is an element of $\algebra{A}$, we mean that $a$ is an element of the domain of $\algebra{A}$. We follow the convention that algebras are always nonempty.

\begin{definition}\label{first}
Let $\sigma$ be an algebraic signature whose symbols are a subset of $\{\compo, \bmeet, \D, \R, 0, \id, \A, \F, \pref, \maxit, \phantom{}^{-1} \}$. An \defn{algebra of partial functions} of the signature $\sigma$ is an algebra of the signature $\sigma$ whose elements are partial functions and with operations given by the set-theoretic operations on those partial functions described in the following.

Let $X$ be the union of the domains and ranges of all the partial functions occurring in an algebra $\algebra{A}$. We call $X$ the \defn{base} of $\algebra{A}$. 
The interpretations of the operations in $\sigma$ are given as follows:
\begin{itemize}
\item
the binary operation $\compo$ is \defn{composition} of partial functions:
\[f \compo g = \{(x,z) \in X^2 \mid \exists y \in X : (x, y) \in f\text{ and }(y, z) \in g\}\text{,}\]
that is, $(f\compo g)(x)=g(f(x))$,
\item
the binary operation $\wedge$ is \defn{intersection}:
\[f \bmeet g = \{(x,y) \in X^2 \mid (x, y) \in f\text{ and }(x, y) \in g\}\text{,}\]
\item
the unary operation $\D$ is the operation of taking the diagonal of the \defn{domain} of a function:
\[\D(f) = \{(x, x) \in X^2 \mid \exists y \in X : (x, y) \in f\}\text{,}\]
\item
the unary operation $\R$ is the operation of taking the diagonal of the \defn{range} of a function:
\[\R(f) = \{(y, y) \in X^2 \mid \exists x \in X : (x, y) \in f\}\text{,}\]
\item
the constant $0$ is the nowhere-defined \defn{empty function}:
\[0 = \emptyset\text{,}\]
\item
the constant $\id$ is the \defn{identity function} on $X$:
\[\id = \{(x, x) \in X^2 \}\text{,}\]
\item
the unary operation $\A$ is the operation of taking the diagonal of the \defn{antidomain} of a function\textemdash those points of $X$ where the function is not defined:
\[\A(f) = \{(x, x) \in X^2 \mid \cancel{\exists} y \in X : (x, y) \in f\}\text{,}\]
\item
the unary operation $\F$ is \defn{fixset}, the operation of taking the diagonal of the fixed points of a function:
\[\F(f) = \{(x, x) \in X^2 \mid (x, x) \in f\}\text{,}\]
\item
the binary operation $\pref$ is \defn{preferential union}:
\[(f \pref g)(x) =
\begin{cases}
f(x) & \text{if }f(x)\text{ defined}\\
g(x) & \text{if }f(x)\text{ undefined, but }g(x)\text{ defined}\\
\text{undefined} & \text{otherwise}
\end{cases}\]
\item
the unary operation $\maxit$ is the \defn{maximum iterate}:
\[ f^\uparrow = \bigcup_{0 \leq n < \omega} (f^n \compo \A(f))\text{,}\]
where $f^0 \coloneqq \id$ and $f^{n+1} \coloneqq f\compo f^n$,
\item
the unary operation $^{-1}$ is an operation we call \defn{opposite}:
\[ f^{-1} = \{(y, x) \in X^2 \mid (x, y) \in f\text{ and }((x', y) \in f \implies x = x')\}\text{.}\]
\end{itemize}
\end{definition}

The list of operations in \Cref{first} does not exhaust those that have been considered for partial functions, but does include the most commonly appearing operations.

\begin{definition}
Let $\algebra{A}$ be an algebra of one of the signatures permitted by \Cref{first}. A \defn{representation of $\algebra{A}$ by partial functions} is an isomorphism from $\algebra{A}$ to an algebra of partial functions of the same signature. If $\algebra{A}$ has a representation then we say it is \defn{representable}.
\end{definition}

In \cite{1182.20058}, Jackson and Stokes give a finite equational axiomatisation of the representation class for the signature $\{\compo, \bmeet, \D, \R\}$ and similarly for any expansion of this signature by operations in $\{0, \id, \F \}$.

In \cite{hirsch}, Hirsch, Jackson and Mikul{\'a}s give a finite equational axiomatisation of the representation class for the signature $\{\compo, \bmeet, \A, \R\}$ and similarly for any expansion of this signature by operations in $\{0, \id, \D, \F, \pref \}$. For expanded signatures containing the maximum iterate operation they give finite sets of axioms that, if we restrict attention to finite algebras, axiomatise the representable ones.

The operation that we call opposite is described in \cite{menger}, where Menger calls the concrete operation `bilateral inverse' and uses `opposite' to refer to an abstract operation intended to model this bilateral inverse. The opposite operation appears again in Schweizer and Sklar's \cite{ssi} and \cite{ssiii}, but thereafter does not appear to have received any further attention. In particular, for signatures containing opposite, axiomatisations of the representation classes remain to be found.

\begin{definition}
Let $\sigma$ be a signature. We say that $\sigma$ has the \defn{finite representation property} (for representation by partial functions) if whenever a finite algebra of the signature $\sigma$ is representable by partial functions, it is representable on a finite base.\footnote{This property has also been called the \emph{finite algebra on finite base property}.}
\end{definition}

In \cite{hirsch}, Hirsch, Jackson and Mikul{\'a}s establish the finite representation property for many signatures that are subsets of $\{\compo, \bmeet, \D, \R, 0, \id, \A, \F, \pref, \maxit, \phantom{}^{-1} \}$. Assuming composition is in the signature, they prove the finite representation property holds for any such signature not containing domain, any not containing range and almost all that do not contain intersection. This leaves one significant group of cases, which they highlight as an open problem: signatures containing $\{\compo, \bmeet, \D, \R \}$.

In this paper we prove that $\{\compo, \bmeet, \D, \R\}$ and any expansion of $\{\compo, \bmeet, \D, \R\}$ by operations that we have mentioned (including opposite) all have the finite representation property.

\section{Uniqueness of Presents and Futures}\label{section:unique}

In this section we derive some results about representations of finite algebras, which we will use in the following section to prove the finite representation property holds.

Throughout this section, let $\sigma$ be a signature with $\{\compo, \bmeet, \D, \R\} \subseteq \sigma \subseteq \{\compo, \bmeet, \D, \R, 0,\linebreak \id, \A, \F, \pref, \maxit, \phantom{}^{-1} \}$ and let $\algebra{A}$ be a finite representable $\sigma$-algebra. Since $\algebra{A}$ is representable, we may freely make use of basic properties of algebras of partial functions in the process of our deductions.

First note that the algebra $\algebra{A}$ is a meet-semilattice, with meet given by $\bmeet$. Whenever we treat $\algebra{A}$ as a poset, we are using the order induced by this semilattice. The set $\D(\algebra{A}) \coloneqq \{\D(a) \mid a \in \algebra{A}\}$ of \defn{domain elements} forms a subsemilattice of $\algebra{A}$.

In any representation $\theta$ of $\algebra{A}$ we have that $(x, y) \in \theta(a)$ if and only if $a$ is greater than or equal to the meet of the finite set $\{b \in \algebra{A} \mid (x, y) \in \theta(b)\}$. Hence we may identify each representation of $\algebra{A}$ with a particular edge-labelled directed graph (with reflexive edges). The label of an edge $(x, y)$ is the least element of $\{b \in \algebra{A} \mid (x, y) \in \theta(b)\}$. Since we take the base of a representation to be the union of the domains and ranges of all the partial functions, every vertex participates in some edge. Given that the domain and range {operations} are in the signature, this means that all vertices will have a reflexive edge.

The previous paragraph motivates our interest in the following type of object, of which representations of $\algebra{A}$ are a special case.

\begin{definition}
A \defn{network} over $\algebra{A}$ will be an edge-labelled directed graph, with labels drawn from $\algebra{A}$ and with a reflexive edge on every vertex.
\end{definition}

Given a network $N$, we will follow the usual convention of writing $x \in N$ to mean $x$ is a vertex of $N$ and we will denote the label of an edge $(x,y)$ by $N(x,y)$. We will speak of an element $b \in \algebra{A}$ holding on an edge $(x, y)$ when $N(x,y) \leq b$. We will call a vertex $x$ an \defn{$\boldsymbol{\alpha}$-vertex} if the reflexive edge at that vertex is labelled $\alpha$, that is if $N(x,x)=\alpha$.

Note that if $x$ is an $\alpha$-vertex of a representation then $\alpha$ is necessarily a domain element. Indeed if $\alpha$ holds on $(x, x)$ it follows that $\D(\alpha)$ holds on $(x, x)$. Since $\alpha$ is the least such element, we have $\alpha \leq \D(\alpha)$. It follows that $\alpha = \D(\alpha)$, by a property of partial functions.

\begin{definition}
Let $N$ be a network (over $\algebra{A}$) and let $W$ be a subset of the vertices of $N$. We define the \defn{future} of $W$ to be the subnetwork induced by the vertices reachable via an edge starting in $W$. We define the future of a vertex $x$ to be the future of the singleton set containing $x$.
\end{definition}

Since $\algebra{A}$ is finite and we are representing by partial functions, in a \emph{representation}, the future of a vertex must be a finite network.

Note also that in a representation, the taking of futures is a closure operator. Indeed, each $x \in W$ is reachable from $W$ via the reflexive edge at $x$. If there is an edge from $x \in W$ to $y$, labelled $a$, and from $y$ to $z$, labelled $b$, then $z$ is reachable via an edge starting in $W$, labelled $a \compo b$.

\begin{definition}
Let $N$ be a network (over $\algebra{A}$). The \defn{present} of a vertex $x$ of $N$ is the set of all vertices $y$ such that $y$ is in the future of $x$ and $x$ is in the future of $y$.
\end{definition}

We are interested in presents and futures because in \Cref{section:frp} we will describe how to use the presents and futures extant in representations in order to construct a representation on a finite base.

\begin{definition}
Let $a \in \algebra{A}$. If there exists a representation of $\algebra{A}$ in which $a$ labels an edge, then we will call $a$ \defn{realisable}.
\end{definition}

\begin{proposition}\label{unique_futures}
For any realisable domain element $\alpha \in \algebra{A}$, any two $\alpha$-vertices from any two representations have isomorphic futures.
\end{proposition}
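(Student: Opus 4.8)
The plan is to build an isomorphism between the two futures vertex by vertex, using the fact that in a representation the future of a vertex is a finite network whose structure is essentially determined by the algebra. Let $\theta_1$ and $\theta_2$ be representations, with an $\alpha$-vertex $x_1 \in \theta_1$ and an $\alpha$-vertex $x_2 \in \theta_2$. Write $F_1$ for the future of $x_1$ and $F_2$ for the future of $x_2$; both are finite networks over $\algebra{A}$. Since every vertex of $F_i$ is reachable from $x_i$ via an edge, for each $y \in F_i$ we may fix an element $a \in \algebra{A}$ with $(x_i, y) \in \theta_i(a)$; because $\theta_i(a)$ is a partial function and $x_i$ is fixed, the vertex $y$ is the unique vertex with $(x_i,y) \in \theta_i(a)$. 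So each vertex of $F_i$ is ``named'' by an element of $\algebra{A}$, and the candidate bijection is: send the vertex of $F_1$ named by $a$ to the vertex of $F_2$ named by $a$.

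First I would check this is well-defined and injective, i.e. that for $a, b \in \algebra{A}$, the element $a$ names the same vertex as $b$ in $F_1$ if and only if it does so in $F_2$. This is where the hypothesis that $\alpha$ is a domain element realised at $x_i$ is used: ``$a$ and $b$ name the same vertex of $F_i$'' should be expressible purely in terms of the algebra and $\alpha$. The natural candidate is something like $\D(\alpha \compo a) = \D(\alpha \compo b)$ together with $\alpha \compo a = \alpha \compo b$ — more precisely, $a$ names a vertex of $F_i$ iff $\alpha \compo \D(a) \neq 0$ in the relevant sense, and two such $a,b$ name the same vertex iff the partial functions $\theta_i(\alpha \compo a)$ and $\theta_i(\alpha \compo b)$, viewed as having domain $\{x_i\}$, have the same output. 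The key point is that whether $x_i$ is in the domain of $\theta_i(c)$ is detected by whether $\alpha \leq \D(c)$ — an algebraic condition independent of the representation — and that the output vertex is then pinned down by which elements $c$ satisfy $\alpha \leq \D(c)$. So one shows: $y = $ the vertex named by $a$ satisfies $(y,z) \in \theta_i(c)$ iff $\alpha \leq \D(a \compo c)$ and $z$ is named by $a \compo c$; hence the edge labels in $F_i$ between named vertices are determined algebraically, which simultaneously gives well-definedness, injectivity, surjectivity, and the fact that the bijection preserves and reflects all edge labels. The reflexive edge at the vertex named by $a$ carries $\D(\alpha \compo a)$ in both, so $\alpha$-vertices go to $\alpha$-vertices and the networks genuinely match.

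The main obstacle I expect is pinning down the correct algebraic invariant for ``$a$ and $b$ name the same vertex.'' Saying $\alpha \compo a = \alpha \compo b$ is not obviously enough on its own — one has to rule out the degenerate possibility that $a$ names a vertex of $F_1$ but $b$ does not, and conversely — so the statement needs to be phrased as an equivalence of the form ``$\alpha \leq \D(a)$, $\alpha \leq \D(b)$, and $\alpha \compo a = \alpha \compo b$'' and one must verify this genuinely captures the geometric relation in an arbitrary representation. A secondary subtlety is checking that \emph{every} vertex of $F_i$ arises as a named vertex for some $a \in \algebra{A}$ (not merely that reachable vertices have some label on the connecting edge): this follows because the label of the edge $(x_i,y)$ is by construction an element of $\algebra{A}$, and $x_i$ being fixed makes $y$ the unique output, but it should be stated carefully. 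Once the invariant is identified, the rest is a routine unwinding of the definitions of future, composition, and domain, using only that $\theta_1,\theta_2$ are representations by partial functions so that ``$(x_i,z)\in\theta_i(c)$ and $(x_i,z')\in\theta_i(c)$ imply $z=z'$.''
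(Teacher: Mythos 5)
Your overall strategy is the same as the paper's: name the vertices of the future by elements of $\algebra{A}$ and show that both the naming relation and the edge labels between named vertices are determined algebraically by $\alpha$. (The paper takes a cleaner normal form: it shows that the \emph{labels} of the edges leaving an $\alpha$-vertex are precisely the elements $a$ with $\D(a)=\alpha$, one edge for each such $a$, so no quotienting by ``names the same vertex'' is needed.) The gap is that you never establish the one fact on which everything rests: that your invariant is \emph{necessary}, i.e.\ that if $a$ and $b$ both hold on the edge $(x_i,y)$ then $\alpha\compo a=\alpha\compo b$. Sufficiency is immediate, but necessity is not: $\theta_i(\alpha)$ is in general a restriction of the identity to a set much larger than $\{x_i\}$ (every other $\alpha$-vertex of that representation lies in it), so $\alpha\compo a=\alpha\compo b$ asserts that $\theta_i(a)$ and $\theta_i(b)$ agree on that whole set, which is prima facie much stronger than agreeing at the single point $x_i$. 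You identify ``the main obstacle'' as the degenerate possibility that $a$ names a vertex in one representation but not the other, but that is not the issue---it is disposed of by the representation-independent condition $\alpha\le\D(a)$, which you already have. The real issue is this local-to-global step, and it is exactly where the paper's proof does its work: minimality of the label $c=N(x_i,y)$ gives $c\le\alpha\compo c$, hence $\D(c)\le\alpha$, which with $\alpha\le\D(c)$ yields $\D(c)=\alpha$; then $c\le a$, $c\le b$ and the fact that $\theta_i(c)$ has the same domain as $\theta_i(\alpha)$ force $\alpha\compo a=c=\alpha\compo b$.

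The gap is repairable within your framework: note that $a\bmeet b$ holds on $(x_i,y)$, so $\alpha\le\D(a\bmeet b)$; then $\alpha\compo a$ and $\alpha\compo(a\bmeet b)$ are partial functions with the same domain, one contained in the other, hence equal, and likewise for $b$. But as written the proposal asserts the invariant as a ``natural candidate'' and defers its verification, and that verification is the entire mathematical content of the proposition. A further small slip: the reflexive edge at the vertex named by $a$ carries $\R(\alpha\compo a)$, not $\D(\alpha\compo a)$ (the latter equals $\alpha$ itself); this is harmless, since it is subsumed by your general edge-label characterisation applied with $z=y$.
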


\begin{proof}
Let $x$ be an $\alpha$-vertex: $\alpha=N(x,x)$. We claim that there is an edge starting at $x$ labelled with $a$ if and only if $\D(a) = \alpha$. 

Suppose first that $\D(a) = \alpha$. Then, by the definition of the domain operation, there must be an edge starting at $x$ labelled with some $b \leq a$. Then $\D(b)$ must hold on $(x, x)$ and so $\D(a) \leq \D(b)$. From $b \leq a$ and $\D(a) \leq \D(b)$ it follows that $a = b$, by a property of partial functions. Hence there is an $a$-labelled edge starting at $x$.

Conversely, suppose there is an edge labelled with $a$ starting at $x$, ending at $y$ say. Then $\alpha \compo a$ holds on $(x, y)$ and so $a \leq \alpha \compo a$. Since $\alpha$ is a domain element, this implies $\D(a) \leq \alpha$, by a property of partial functions. But $\D(a)$ holds on $(x, x)$ and so $\alpha \leq \D(a)$, since $\alpha=N(x, x)$. We conclude that $\D(a) = \alpha$.

Note that as the elements of $\algebra{A}$ are represented by partial functions, there cannot be multiple edges starting at $x$ on which the same element holds. In particular there cannot be multiple edges with the same label. We therefore now know that for any $\alpha$-vertex $x$, the edges starting at $x$ are precisely a single edge labelled $a$ for every $a$ with $\D(a) = \alpha$ (the $\alpha$-labelled edge being the reflexive one). So we have an obvious candidate for an isomorphism between the futures of $\alpha$-vertices: given two $\alpha$-vertices $x$ and $x'$ in networks $N$ and $N'$ respectively, we let $y\mapsto y'$ if and only if $N(x,y)=N'(x',y')$.

For $a$ and $b$ with $\D(a) = \D(b) = \alpha$, let $(x, y)$ and $(x, z)$ be the two edges starting at $x$ and labelled by $a$ and $b$ respectively. To show that the future of $x$ is uniquely determined up to isomorphism, we only need show that the set of elements of $\algebra{A}$ holding on $(y, z)$ is uniquely determined. We claim that 
\begin{equation}\label{eq-comp}
\text{if $N(x,y)=a$ and $N(x,z)=b$, then $c$ holds on $(y, z)$ if and only if $a \compo c = b$}, 
\end{equation}
which gives us what we want.

Suppose first that $a \compo c = b$. Then as $(x, y)$ is the \emph{unique} edge starting at $x$ on which $a$ holds, $c$ must hold on $(y, z)$ in order that composition be represented correctly. Conversely, suppose that $c$ holds on $(y, z)$. Then $a \compo c$ holds on $(x, z)$ and so $b \leq a \compo c$. But $\D(a \compo c) \leq \D(a)$ is valid in all representable algebras and $\D(a) = \D(b)$. From $b \leq a \compo c$ and $\D(a \compo c) \leq \D(b)$ we may conclude $a \compo c = b$, by a property of partial functions.
\end{proof}

For realisable domain elements $\alpha$ and $\beta$, write $\alpha \lesssim \beta$ if in a (or every) representation of $\algebra{A}$ there is a $\beta$-vertex in the future of every $\alpha$-vertex, or equivalently if there exists an $a \in \algebra{A}$ with $\D(a) = \alpha$ and $\R(a) = \beta$. Then $\lesssim$ is easily seen to be a preorder on the realisable domain elements.

\begin{proposition}\label{lemma:complete}
For any realisable domain elements $\alpha, \beta \in \algebra{A}$, if $\alpha$ and $\beta$ are $\lesssim$-equivalent, then any $\alpha$-vertex and any $\beta$-vertex from any two representations have isomorphic futures.
\end{proposition}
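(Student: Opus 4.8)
\emph{Proof plan.} The plan is to reduce everything to \Cref{unique_futures}. That proposition already tells us that all $\alpha$-vertices, across all representations, share a single future up to isomorphism --- call it $F_\alpha$ --- and likewise all $\beta$-vertices share a future $F_\beta$. So it suffices to exhibit, inside a \emph{single} representation, one $\alpha$-vertex and one $\beta$-vertex whose futures are literally equal as subnetworks.

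First I would unpack the hypothesis. Since $\alpha \lesssim \beta$ there is some $a \in \algebra{A}$ with $\D(a) = \alpha$ and $\R(a) = \beta$, and since $\beta \lesssim \alpha$ there is some $b \in \algebra{A}$ with $\D(b) = \beta$ and $\R(b) = \alpha$. Fix any representation $N$ containing an $\alpha$-vertex $x$; one exists because $\alpha$ is realisable. Because $\D(a) = \alpha$, the argument from the proof of \Cref{unique_futures} supplies an $a$-labelled edge $(x, y)$ in $N$. A dual computation shows $y$ is a $\beta$-vertex: $\R(a)$ holds on $(y, y)$, so $N(y, y) \leq \R(a) = \beta$; conversely $N(y, y)$ is a domain element (being the reflexive label in a representation) and $a \leq a \compo N(y, y)$, since $a \compo N(y, y)$ holds on $(x, y)$, whence $\R(a) \leq N(y, y)$ by a property of partial functions, giving $N(y, y) = \beta$. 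Running the same step at the $\beta$-vertex $y$ with $b$ in place of $a$ produces a $b$-labelled edge $(y, z)$ with $z$ an $\alpha$-vertex.

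Now I would bring in finiteness. Taking futures is a closure operator on a representation (as noted just after the definition of future), and $z$ lies in the future of $y$ while $y$ lies in the future of $x$, so as induced subnetworks of $N$ we have the chain $\mathrm{future}(z) \subseteq \mathrm{future}(y) \subseteq \mathrm{future}(x)$. All three are finite networks, and by \Cref{unique_futures} we have $\mathrm{future}(z) \cong \mathrm{future}(x)$ because $x$ and $z$ are both $\alpha$-vertices; hence $\abs{\mathrm{future}(z)} = \abs{\mathrm{future}(x)}$, which forces the three vertex sets to coincide, and therefore $\mathrm{future}(y) = \mathrm{future}(x)$. Thus a $\beta$-vertex and an $\alpha$-vertex of one and the same representation have equal futures; one further application of \Cref{unique_futures} on each side yields $F_\beta \cong \mathrm{future}(y) = \mathrm{future}(x) \cong F_\alpha$, and since every $\alpha$-vertex has future isomorphic to $F_\alpha$ and every $\beta$-vertex future isomorphic to $F_\beta$, the claim follows.

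The heart of the argument is the ``$\alpha \to \beta \to \alpha$, then sandwich by finiteness'' trick; once that is in hand nothing is hard. The only step needing a little care is the dual range computation identifying $y$ (respectively $z$) as a $\beta$-vertex (respectively $\alpha$-vertex), but this is a routine mirror of the domain reasoning already carried out in \Cref{unique_futures}, so I do not anticipate a genuine obstacle.
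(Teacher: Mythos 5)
Your proposal is correct and follows essentially the same route as the paper: find a $\beta$-vertex $y$ in the future of $x$ and an $\alpha$-vertex $z$ in the future of $y$, then use finiteness together with \Cref{unique_futures} to force $\mathrm{future}(z)=\mathrm{future}(x)$ and hence $\mathrm{future}(y)=\mathrm{future}(x)$. The only cosmetic difference is that you derive the existence of $y$ and $z$ explicitly from witnesses $a,b$ via the range computation, whereas the paper invokes the equivalent ``$\beta$-vertex in the future of every $\alpha$-vertex'' formulation of $\lesssim$ directly.
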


\begin{proof}
Let $x$ be an $\alpha$-vertex, from some representation of $\algebra{A}$. As $\alpha \lesssim \beta$, in the same representation there is a $\beta$-vertex, $y$ say, in the future of $x$. As $\beta \lesssim \alpha$ there is an $\alpha$-vertex, $z$ say, in the future of $y$. Hence $z$ is in the future of $x$, meaning that the future of $z$ is a subnetwork of the future of $x$. But these are finite isomorphic objects and therefore equal. So $x$ is in the future of $z$ and therefore $x$ is in the future of $y$. Hence the futures of the $\alpha$-vertex $x$ and the $\beta$-vertex $y$ are equal in this representation. By \Cref{unique_futures}, we conclude the required result.
\end{proof}

In a representation, the present of an $\alpha$-vertex $x$ is always the initial, strongly connected component of $x$'s future---the one that can `see' the entire future of $x$. So we get the following immediate corollary of \Cref{lemma:complete}.

\begin{corollary}\label{corollary:present}
For any realisable domain elements $\alpha, \beta \in \algebra{A}$, if $\alpha$ and $\beta$ are $\lesssim$-equivalent, then any $\alpha$-vertex and any $\beta$-vertex from any two representations have isomorphic presents.
\end{corollary}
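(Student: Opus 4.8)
The plan is to deduce \Cref{corollary:present} directly from \Cref{lemma:complete}, by showing that, in a representation, the present of a vertex is completely determined by the isomorphism type of its future as an edge-labelled network, with no distinguished basepoint needed.

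First I would make precise the remark preceding the statement: in a representation, the present of a vertex $x$ is the \emph{initial strongly connected component} of the future of $x$. Two facts are needed. First, since composition is represented, a vertex $z$ lies in the future of $x$ if and only if there is a directed path from $x$ to $z$: the composite of two consecutive edges, labelled $a$ and $c$, is again an edge, labelled by a lower bound of $a \compo c$. Second, the future of $x$ is closed under taking edges --- this is exactly the closure-operator property of futures already noted in the text --- so for $z$ in the future of $x$, the vertices reachable from $z$ \emph{inside} the future of $x$ are the same as those reachable from $z$ in the whole representation, and in particular the future of $z$ is a subnetwork of the future of $x$. Combining these, the present of $x$ --- the set of vertices $z$ such that $z$ is in the future of $x$ and $x$ is in the future of $z$ --- is precisely the set of vertices lying on a common directed cycle with $x$ inside the future of $x$, that is, the strongly connected component of $x$. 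Since $x$ reaches every vertex of its own future, this component is moreover the unique source of the condensation of the future of $x$: the unique strongly connected component from which every vertex of the future is reachable.

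Next I would observe that ``being the unique strongly connected component from which every vertex is reachable'' is preserved by isomorphisms of edge-labelled networks. Hence any network isomorphism from the future of $x$ to the future of $y$ must carry this distinguished component, together with its induced edge labels, onto the analogous component of the future of $y$; so the subnetworks induced on the present of $x$ and the present of $y$ are isomorphic. Finally, given realisable domain elements $\alpha, \beta$ that are $\lesssim$-equivalent, an $\alpha$-vertex $x$ and a $\beta$-vertex $y$ taken from any two representations, \Cref{lemma:complete} supplies an isomorphism between the future of $x$ and the future of $y$, which by the previous step restricts to an isomorphism of their presents.

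I do not expect a genuine obstacle here: the substance is already in \Cref{lemma:complete}. The one point needing care is the two facts above --- confirming that, inside a representation, the ``future'' of a vertex, defined as reachability by a single labelled edge, coincides with reachability by a directed path, and that this future is edge-closed, so that the present really is an intrinsic graph-theoretic feature of the future network rather than something depending on the ambient representation or on the choice of basepoint.
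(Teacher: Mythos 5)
Your proposal is correct and takes essentially the same route as the paper: the paper dispatches this corollary in one sentence by observing that the present of a vertex is the initial strongly connected component of its future (the one that can ``see'' the whole future), an isomorphism-invariant, basepoint-free feature, and then invokes \Cref{lemma:complete}. Your write-up simply fills in the details of that observation.
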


Given a $\lesssim$-equivalence class $E$, we will speak of `the future of $E$' to mean the unique isomorphism class of futures of $\alpha$-vertices in representations, for any $\alpha \in E$. Similarly for `the present of $E$'.

\section{The Finite Representation Property}\label{section:frp}

In this section, we prove our main result: the finite representation property holds for all the signatures we are interested in. We then use our proof to calculate an upper bound on the size of base required.

To construct a representation over a finite base we will use the realisable domain elements and the preorder $\lesssim$ on them defined in \Cref{section:unique}. Recall that the realisable domain elements are the domain elements appearing as reflexive-edge labels in some representation. 

We start with a lemma that is little more than a translation of the definition of a representation into the language of graphs, but which gives us an opportunity to state exactly what is needed in order for a network to be a representation.

\begin{lemma}\label{lemma:network}
Let $\algebra{A}$ be a finite representable algebra of a signature $\sigma$ with $\{\compo, \bmeet, \D, \R\} \linebreak\subseteq \sigma \subseteq \{\compo, \bmeet, \D, \R, 0, \id, \A, \F, \pref, \maxit, \phantom{}^{-1}\}$ and let $N$ be a network (over $\algebra{A}$). Then $N$ is a representation of $\algebra{A}$ if and only if the following conditions are satisfied.
\begin{enumerate}[(i)]
\item\label{functional}
\textbf{(Relations are functions)} For any vertex $x$ of $N$ and any $a \in \algebra{A}$ there is at most one edge starting at $x$ on which $a$ holds.
\item\label{operations}
\textbf{(Operations represented correctly)} Let $*$ be an operation in the signature (excluding $\bmeet$). Then (assuming for simplicity of presentation that $*$ is a binary operation) if we apply the appropriate set-theoretic operation to the set of edges where $a$ holds and the set of edges where $b$ holds then we get precisely the set of edges where $a * b$ holds:
\[
\{(x,y)\mid N(x,y)\le a\}*\{(x',y')\mid N(x',y')\le b\}=\{(x'',y'')\mid N(x'',y'')\le a * b\}.
\]
\item\label{faithful}
\textbf{(Faithful)} For every $a, b \in \algebra{A}$ with $a \nleq b$, there is an edge of $N$ on which $a$ holds, but $b$ does not.
\end{enumerate}
\end{lemma}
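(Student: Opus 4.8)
The plan is to make explicit the map $\theta$ that a network $N$ determines, namely the map sending $a\in\algebra{A}$ to the set of edges $\{(x,y)\mid N(x,y)\le a\}$, and to recall --- as observed in the discussion preceding the definition of a network --- that every representation of $\algebra{A}$ is, when read as an edge-labelled graph, recovered from $N$ in exactly this way. Thus ``$N$ is a representation of $\algebra{A}$'' is synonymous with ``$\theta$ is an isomorphism from $\algebra{A}$ onto an algebra of partial functions on the vertex set of $N$'', and it suffices to prove that the latter holds precisely when conditions \ref{functional}, \ref{operations} and \ref{faithful} are satisfied. One preliminary point deserves recording: since every vertex $x$ carries a reflexive edge, with label $\alpha$ say, we have $(x,x)\in\theta(\alpha)$, so the union of the domains and ranges of the partial functions $\theta(a)$ is exactly the vertex set of $N$; hence the base of the candidate algebra is the whole vertex set and the set-theoretic operations $\D$, $\R$, $\id$, and so on are computed with respect to the correct set.

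I would then split ``isomorphism onto an algebra of partial functions'' into three requirements and match these one-for-one with the three conditions. First, \emph{each $\theta(a)$ is a partial function} --- rather than a mere relation --- exactly when no vertex has two outgoing edges on both of which $a$ holds, which is condition \ref{functional}. Second, \emph{$\theta$ is a homomorphism}: for every operation other than $\bmeet$, the requirement $\theta(a*b)=\theta(a)*\theta(b)$, together with its unary and nullary analogues (the latter covering the constants $0$ and $\id$), is literally condition \ref{operations}; the remaining case $*=\bmeet$ needs no hypothesis, since $N(x,y)\le a\bmeet b$ if and only if $N(x,y)\le a$ and $N(x,y)\le b$, by definition of meet, so that $\theta(a\bmeet b)=\theta(a)\cap\theta(b)$ automatically. (One should also check, routinely and operation by operation, that each set-theoretic operation of the signature sends partial functions to partial functions, so that once $\theta$ is a homomorphism with partial-function values its image is genuinely an algebra of partial functions of signature $\sigma$, closed under those operations.) Third, \emph{$\theta$ is injective}: here I would use that $\theta$ is automatically order-preserving --- $a\le b$ gives $\theta(a)\subseteq\theta(b)$ --- and that a meet-preserving map is injective if and only if it is an order-embedding, that is, if and only if $\theta(a)\subseteq\theta(b)$ implies $a\le b$; contraposed, this says that whenever $a\nleq b$ some edge witnesses $\theta(a)\not\subseteq\theta(b)$, which is precisely condition \ref{faithful}. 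Assembling the three equivalences yields the lemma.

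I do not anticipate a genuine obstacle --- the content is essentially bookkeeping --- but the points that call for a little care are: (a) the identification of ``$N$ is a representation'' with ``the canonical $\theta$ is an isomorphism'', which rests on the uniqueness-of-labelling observation made before the definition of a network; (b) noticing that $\bmeet$ is represented for free and must therefore be excluded from condition \ref{operations}; and (c) the step ``$\theta$ is injective if and only if $\theta$ is an order-embedding'', which relies on $\algebra{A}$ being a meet-semilattice and on $\theta$ preserving meets.
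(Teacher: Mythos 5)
Your proposal is correct, and it takes the only natural route: the paper's own proof is literally the single word ``Routine,'' so your write-up is a faithful elaboration of exactly the bookkeeping the authors omit. All three of the points you flag for care --- the identification of $N$ with the canonical map $\theta(a)=\{(x,y)\mid N(x,y)\le a\}$, the automatic representation of $\bmeet$, and injectivity via the order-embedding property of a meet-preserving map --- are handled correctly.
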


\begin{proof}
Routine.
\end{proof}

We also require the following definition.

\begin{definition}
Let $\algebra{A}$ be a finite representable algebra of a signature $\sigma$ with $\{\compo, \bmeet, \D, \R\} \subseteq \sigma \subseteq \{\compo, \bmeet, \D, \R, 0, \id, \A, \F, \pref, \maxit, \phantom{}^{-1}\}$. From the relation $\lesssim$ defined in \Cref{section:unique}, form the partial order of $\lesssim$-equivalence classes (of realisable domain elements of $\algebra{A}$). The \defn{depth} of a $\lesssim$-equivalence class $E$ will be the length of the longest increasing chain in this partial order, starting at $E$. (We take the length of a chain to be one fewer than the number of elements it contains, so a maximal $\lesssim$-equivalence class has depth zero.) Since $\algebra{A}$ is finite, the depth of every $\lesssim$-equivalence class is finite and bounded by the size of $\algebra{A}$. The depth of a realisable domain element will be the depth of its $\lesssim$-equivalence class.
\end{definition}

We are now ready to prove our main result, but note that the following theorem does not cover signatures containing opposite.

\begin{theorem}\label{main}
The finite representation property holds for representation by partial functions for any signature $\sigma$ with $\{\compo, \bmeet, \D, \R\} \subseteq \sigma \subseteq \{\compo, \bmeet, \D, \R, 0, \id, \A, \F, \pref, \maxit \}$.
\end{theorem}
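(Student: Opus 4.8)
The plan is to build a representation on a finite base by gluing together finitely many "local" pieces — one for each $\lesssim$-equivalence class of realisable domain elements — using the uniqueness results of \Cref{section:unique}. The key observation is that although futures of vertices can be infinite, there are only finitely many isomorphism types of them (one per $\lesssim$-equivalence class, by \Cref{lemma:complete} and \Cref{corollary:present}), and a future is controlled by its present together with the futures of the "strictly later" equivalence classes it sees. So I would proceed by induction on depth, constructing for each $\lesssim$-equivalence class $E$ a finite network $N_E$ that faithfully represents the "part of $\algebra{A}$ visible from $E$", reusing the already-constructed finite networks for the strictly-smaller-depth classes that $E$ can reach.

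\textbf{Construction.} Fix, for each $\lesssim$-equivalence class $E$, a representative $\alpha \in E$ and an $\alpha$-vertex $x_E$ in some representation, and let $P_E$ be its present — a \emph{finite} strongly connected network, by \Cref{corollary:present} (finiteness because we represent a finite algebra by partial functions). Working from maximal depth downwards: when I reach $E$, every edge leaving $P_E$ lands in the future of some $\beta$-vertex with $\beta$ strictly $\lesssim$-above the class of its source, hence of strictly smaller depth, so by induction I already have a finite network $N_{E'}$ for that class; I attach a copy of the appropriate $N_{E'}$. Crucially, I must identify copies consistently: two edges out of $P_E$ leading to vertices in the same $\lesssim$-class must be glued to the \emph{same} copy exactly when, in the original representation, they lead into the same present. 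Because presents and futures are determined up to canonical isomorphism by the $\lesssim$-class (Propositions~\ref{unique_futures} and~\ref{lemma:complete}), this gluing is well-defined and the resulting $N_E$ is finite. Finally I take $N$ to be the disjoint union of the $N_E$ over all $\lesssim$-classes $E$ (or, more economically, a single $N_{E}$ already containing every class as a sub-future, since every realisable domain element's future embeds into the future of a minimal-depth class — but the disjoint union is safest for faithfulness).

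\textbf{Verification.} I then check the three conditions of \Cref{lemma:network}. Condition~\eqref{functional} (relations are functions) holds because in each glued piece the edges out of any vertex are, by the analysis in the proof of \Cref{unique_futures}, exactly one per element $a$ with $\D(a)$ equal to that vertex's label, so no element holds on two edges from the same vertex; the gluing respects this. Condition~\eqref{operations} is the delicate one: composition, meet, domain and range are "local" in a suitable sense — e.g. \eqref{eq-comp} in the proof of \Cref{unique_futures} shows that which element holds on $(y,z)$ is forced by the labels of $(x,y)$ and $(x,z)$ whenever $x$ can see both — so these are inherited from the original representations piece-by-piece; the unary operations $\D, \R, \A, \F, \maxit$ and the constants $0, \id$ only ever put labels on reflexive edges, and $\pref$ and $\compo$ likewise have the property that membership of an edge in $a * b$ is witnessed within the future of that edge's source. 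The point is that each $N_E$ is a sub-network of (a disjoint copy of) a genuine representation "modulo the identifications", so all the set-theoretic equalities transfer. Condition~\eqref{faithful} holds because $a$ is realisable iff it labels an edge in some representation, hence in the corresponding $P_E$ or in some $N_{E'}$ we glued in; and if $a \nleq b$ then in some representation there is an edge where $a$ holds but $b$ does not, and that edge (being inside some finite future) appears in our construction.

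\textbf{Main obstacle.} The hard part will be the gluing step: showing that the identifications of copies of lower-depth networks can be made globally consistently, so that the result is genuinely a network (a single label per edge) and still satisfies condition~\eqref{operations}. This is where one must use not just that futures are isomorphic, but that the isomorphisms are \emph{canonical} (label-determined, as in the proof of \Cref{unique_futures}: $y \mapsto y'$ iff $N(x,y) = N'(x',y')$), so that overlapping futures of different vertices in $P_E$ can be amalgamated coherently. I expect the cleanest formulation is to define the vertices of $N_E$ as equivalence classes of finite label-sequences (roots at $x_E$, steps along edges) modulo the relation "leads to the same vertex in the fixed representation", and to verify that composition is well-defined on these classes — essentially a confluence/coherence argument. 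Once that is in place, the bound on $|N|$ claimed in the abstract falls out by counting: at most $|\algebra{A}|$ equivalence classes, each present of size at most (roughly) $2^{|\algebra{A}|}$, each future built from at most $|\algebra{A}|$ of these, giving a double-exponential bound overall.
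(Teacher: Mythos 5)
Your overall skeleton --- induction on the depth of $\lesssim$-equivalence classes, one finite present per class, verification against \Cref{lemma:network} --- matches the paper's, and the coherence worry you flag about gluing is real but resolvable (the paper packages it into the notion of an \emph{allowable choice} of edges from a new present-copy into the already-built network). The genuine gap is your treatment of range. You group $\R$ with the operations that are ``inherited piece-by-piece'' because they ``only ever put labels on reflexive edges'', but $\R$ is the one operation in the signature that is \emph{not} determined by futures: $\R(a)$ holding at $y$ requires an $a$-labelled edge \emph{ending} at $y$, i.e.\ information about the past of $y$. Your $N_E$ is (a copy of) the future of a single vertex $x_E$ in a representation, and a future is closed under outgoing edges only; a vertex $y$ inside it with label $\beta \le \R(a)$ may have its witnessing edge $(x', y)$ start outside the future of $x_E$ --- for instance when $\D(a)$ lies in a $\lesssim$-class not reachable from $E$. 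Taking a disjoint union over all classes does not repair this, since range demands an incoming $a$-edge at \emph{every} vertex where $\R(a)$ holds, not merely at some vertex carrying the right label. (Simplest instance: a depth-$0$ class whose present is a single point yields an isolated reflexive vertex, on which $\R(a)$ may nevertheless hold for some $a$ with $\D(a)$ in a deeper class.)

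This difficulty is exactly what drives the paper's design. Rather than one copy of each present glued according to one fixed representation, the paper attaches, at stage $n$, a fresh copy of the present of each depth-$n$ class for \emph{every} allowable choice of outgoing edges into $N_{n-1}$, and moreover $|\algebra{A}|$ copies per choice. It then proves an embedding lemma (\Cref{lem-ex}): the future of any vertex of any representation embeds into $N$, and the embedding can be prescribed in advance on any future-closed subset. Given $\R(a)$ holding at $y \in N$, one finds an edge $(x',y')$ in a representation on which $a$ holds and embeds the future of $x'$ into $N$ sending $y'$ to $y$; the image of $(x',y')$ is the required incoming edge. The multiplicity $|\algebra{A}|$ is what guarantees enough room to extend such an embedding past present-copies already claimed. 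Without something playing the role of ``all allowable choices, with multiplicity'', your construction cannot represent range correctly, so this is a missing idea rather than a presentational gap. (A minor further point: the present of a class has at most $|\algebra{A}|$ vertices, not $2^{|\algebra{A}|}$, since distinct edges out of a vertex carry distinct labels.)
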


\begin{proof}
Let $\algebra{A}$ be a finite representable algebra of one of the signatures under consideration. We construct a finite network $N$ step by step, by adding copies of the present of $\lesssim$-equivalence classes of increasing depths. Then we argue that the resulting network is a representation of $\algebra{A}$. The idea of the proof is to always `add everything we can, $\abs{\algebra{A}}$ times'.

Assume inductively that we have carried out steps $0, \ldots, n-1$ of our construction, giving us the network $N_{n-1}$. We form $N_n$ as follows. (For the base case of this induction, we let $N_{-1}$ be the empty network.) Let $E$ be a $\lesssim$-equivalence class of depth $n$, and $P$ a copy of the present of $E$. A choice of edges from $P$ to $N_{n-1}$ labelled by elements of $\algebra{A}$ is \defn{allowable} if adding $P$ and these edges to $N_{n-1}$ would make the future of $P$ in the extended network isomorphic to the future of $E$. For every allowable choice, to $N_{n-1}$ we add $\abs{\algebra{A}}$ copies of both $P$ and the edges from $P$ specified by the choice. The network $N_n$ is the network we have once we have done this for every $\lesssim$-equivalence class of depth $n$.

Note that the order that $\lesssim$-equivalence classes of a given depth $n$ are processed is immaterial, since no allowable choice could have an edge ending at a vertex that had not been in $N_{n-1}$. By induction, each $N_n$ is finite: assume that $N_{n-1}$ is finite; then as each $P$ is also finite and $\algebra{A}$ is finite we see that the number of allowable choices is finite, so $N_n$ is finite. We take $N$ to be $N_M$, where $M$ is the maximum depth of any $\lesssim$-equivalence class of $\algebra{A}$.

For any vertex $x$ of $N$, the future of $x$ during the various stages of the construction of $N$ is unaltered once $x$ has been added to the construction. So the future of $x$ in $N$ is isomorphic to the future of some vertex in a representation of $\algebra{A}$, since this is true at the moment that $x$ is added to the construction, by the definition of an allowable choice.

The next lemma will ensure that allowable choices always exist.
Let $N'$ be the underlying network for a representation of $\algebra{A}$.
Fix a vertex $x\in N'$ and write $F$ for the future of $x$ in $N'$.
Define $F_n$ to be the subnetwork of $F$ induced by vertices $y$ such that the depth of $N'(y,y)$ is at most $n$.

\begin{lemma}\label{lem-ex}
For every $n \geq -1$, there is an embedding $f_n\colon F_n \hookrightarrow N_n$.
Moreover, if $G$ is any future-closed subset of $F$ and $g \colon G \hookrightarrow N$ is an embedding, then $f_n$ can be chosen so that it agrees with $g$ wherever $f_n$ and $g$ are both defined.
\end{lemma}

\begin{proof}
We use induction on $n$.
As before, the base case for the induction is depth $-1$, so we define $f_{-1}$ to be the empty map.

For $n > -1$, suppose we have an embedding $f_{n-1}\colon F_{n-1}\hookrightarrow N_{n-1}$, a future-closed subset $G$ of $F$ and an embedding $g \colon G \hookrightarrow N$ such that $f_{n-1}$ and $g$ agree where both are defined. We can form $f_n$, an extension of $f_{n-1}$, as follows. 
First use $g$ to define an intermediate extension $f'_{n-1}$ of $f_{n-1}$ to those vertices in $(F_n \setminus F_{n-1}) \cap G$, that is:
\[
f'_{n-1}(y) =
\begin{cases}
f_{n-1}(y) & \text{if }y\in F_{n-1}\\
g(y) & \text{if }y\in (F_n \setminus F_{n-1}) \cap G.
\end{cases}
\]
Using the assumption that $G$ is future closed, we will show that this intermediate extension $f'_{n-1}$ is still an embedding. Observe that for any $y \in (F_n \setminus F_{n-1}) \cap G$, the future of $g(y)$ in $N$ is isomorphic to the future of $y$ in $N'$, since the future of any vertex of $N$ is isomorphic to the future, in any representation, of any vertex with the same reflexive-edge label. Hence $f'_{n-1}$ maps elements of $(F_n \setminus F_{n-1}) \cap G$ to elements of $N_n \setminus N_{n-1}$, from which we see that $f'_{n-1}$ is injective. Now for $f'_{n-1}$ to be an embedding of $F_{n-1}\cup (F_n\cap G)$, we need to show that for arbitrary $y, z \in F_{n-1}\cup (F_n\cap G)$ and $a \in \algebra{A}$,  there is an edge labelled $a$ from $y$ to $z$ if and only if there is an edge labelled $a$ from $f'_{n-1}(y)$ to $f'_{n-1}(z)$. If $y, z \in F_{n-1}$ then we use that $f_{n-1}$ is an embedding. Similarly, if $y, z \in (F_n \setminus F_{n-1}) \cap G$ then we use that $g$ is an embedding. If $y \in F_{n-1}$ and $z \in (F_n \setminus F_{n-1}) \cap G$ then there is no edge $(y, z)$, because there are no edges from $F_{n-1}$ to $F_n \setminus F_{n-1}$, nor is there an edge $(f'_{n-1}(y), f'_{n-1}(z))$, because there are no edges from $N_{n-1}$ to $N_n \setminus N_{n-1}$.
It remains to consider the case when $y \in (F_n \setminus F_{n-1}) \cap G$ and $z\in F_{n-1}$.

First assume there is an edge from $y$ to $z$ labelled $a$. Then $z \in G$, since $G$ is future closed. Hence $f'_{n-1}(z) = f_{n-1}(z) = g(z)$, by the inductive hypothesis, and the edge $(g(y), g(z))$ is labelled $a$ in $N$, since $g$ is an embedding. 

Conversely, assume there is an edge from $g(y)$ to $f'_{n-1}(z)$ labelled $a$. Then $f'_{n-1}(z)$ is in the future of $g(y)$ in $N$. Since the future of $g(y)$ in $N$ is isomorphic to the future of $y$ in the representation $N'$, we see firstly that $f'_{n-1}(z)$ is the unique vertex of $N$ being the end of an $a$-labelled edge starting at $g(y)$. Secondly, in $N'$ there is a unique $a$-labelled edge, $(y, z')$ say, starting at $y$. Then $z' \in G\cap F_n$, since $G$ and $F_n$ are future closed, and  $g(z') = f'_{n-1}(z)$, since $g$ is an embedding. By injectivity of $f'_{n-1}$, we have $z' = z$ and so $N'(y,z)=a$, as desired. This completes the proof that $f'_{n-1}$ is an embedding.

The remaining vertices we need to extend to are partitioned into copies of the present of various $\lesssim$-equivalence classes of depth $n$. 
Fix one copy $P$ in $F_n$ of one of these equivalence classes. Then $P$ and $f_{n-1}$ (being an embedding of $F_{n-1}$ into $N_{n-1}$) together specify an allowable choice of edges from $P$ to $N_{n-1}$.
Since every allowable choice has been replicated $\abs{\algebra{A}}$ times during each step of the construction of $N_{n}$, this provides not just one but $\abs{\algebra{A}}$ possible ways to extend $f_{n-1}$ to $P$ and to the edges starting in $P$.
The number of vertices starting at $x$ in $F$ is bounded by the number of elements of $\algebra{A}$. So $F$, and therefore $F_n$, certainly contain no more than $\abs{\algebra{A}}$ copies of the present of any $\lesssim$-equivalence class of depth $n$ (including any copies in $G$). Hence there exists a way to extend $f'_{n-1}$ to all these copies simultaneously.
\end{proof}

It remains to show that $N$ is a representation of $\algebra{A}$, so we need to show that $N$ satisfies the conditions of \Cref{lemma:network}. It is easy to see that the relations are functions. From the fact that the future of any vertex of $N$ is isomorphic to the future of some vertex in a representation of $\algebra{A}$, it follows that there is at most one edge starting at $x$ on which any given $a \in \algebra{A}$ holds.

Next we need to show that the operations are represented correctly by $N$. With the exception of range, all the operations are straightforward and similar to show. We again rely on the fact that for any vertex $x$ in $N$, the future of $x$ is isomorphic to the future of some vertex in a representation of $\algebra{A}$.

To see that composition is represented correctly, suppose first that $a$ holds on $(x, y)$ and $b$ holds on $(y, z)$. Then as the future of $x$ matches the future of some vertex in a representation, $a \compo b$ holds on $(x, z)$. Conversely, suppose that $a \compo b$ holds on $(x, z)$. Then again by matching $x$ with a vertex in a representation, we know there is a $y$ such that $a$ holds on $(x, y)$ and $b$ holds on $(y, z)$.

To see that domain is represented correctly, suppose first that $\D(a)$ holds on $(x, y)$. Then by matching $x$ with a vertex in a representation, we know both that $x = y$ and that there is an edge starting at $x$ on which $a$ holds. Conversely, suppose that $a$ holds on an edge $(x, y)$. Then by matching $x$, we see that $\D(a)$ must hold on $(x, x)$.

If $0$ is in the signature, then no edge in any representation of $\algebra{A}$ is labelled with $0$. Hence $0$ does not hold on any edge in $N$ and so $N$ represents $0$ correctly. If $\id$ is in the signature then in any representation, $\id$ holds on all reflexive edges and no others. Hence the same is true of $N$ and so $N$ represents $\id$ correctly.

To see that antidomain is represented correctly if it is in the signature, suppose first that $\A(a)$ holds on $(x, y)$. Then by matching $x$ with a vertex in a representation, we know both that $x = y$ and that there is no edge starting at $x$ on which $a$ holds. Conversely, suppose there is no edge starting at $x$ on which $a$ holds. Then by matching $x$, we see that $\A(a)$ must hold on $(x, x)$.

To see that fixset is represented correctly if it is in the signature, suppose first that $\F(a)$ holds on $(x, y)$. Then by matching $x$ with a vertex in a representation, we know both that $x = y$ and that $a$ holds on $(x, x)$. Conversely, suppose $a$ holds on $(x, x)$. Then by matching $x$, we see that $\F(a)$ must hold on $(x, x)$.

To see that preferential union is represented correctly if it is in the signature, suppose first that $a \pref b$ holds on $(x, y)$. Then by matching $x$ with a vertex in a representation, we know that on $(x, y)$ either $a$ holds, or $a$ does not hold and $b$ does. Conversely, suppose that on $(x, y)$ either $a$ holds, or $a$ does not hold and $b$ does. Then by matching $x$, we see that $a \pref b$ must hold on $(x, y)$.

To see that maximum iterate is represented correctly if it is in the signature, suppose first that $a^\uparrow$ holds on $(x_0, x_n)$. Then by matching $x_0$ with a vertex in a representation, we know there exist $x_0, x_1, \ldots, x_n$ such that $a$ holds on each $x_i, x_{i+1}$ and there is no edge starting at $x_n$ on which $a$ holds. Conversely, suppose there exist $x_0, x_1, \ldots, x_n$ such that $a$ holds on each $(x_i, x_{i+1})$ and there is no edge starting at $x_n$ on which $a$ holds. Then by matching $x_0$, we see that $a^\uparrow$ must hold on $(x_0, x_{i+1})$.

One direction of range being represented correctly is clear: if $N$ has an edge from $x$ to $y$ on which $a$ holds, then $\R(a)$ will hold on the reflexive edge at $y$. For the other direction, suppose that $\R(a)$ holds on a vertex $y$ in $N$ and let $\beta$ be the label of $y$. Then we know that we can find a $\beta$-vertex, $y'$ say, in a representation and that $\R(a)$ will hold on $y'$. So there is an edge $(x', y')$ in this representation on which $a$ holds. Since the future of $y'$ is isomorphic to the future of $y$ via an isomorphism sending $y'$ to $y$, there is an embedding of the future of $y'$ into $N$ sending $y'$ to $y$. Then \Cref{lem-ex} ensures we can embed the future of $x'$ into $N$ in such a way that $y'$ is mapped to $y$ and so there is an $a$-labelled edge ending at $y$.

For the condition that $N$ be faithful, consider any $a, b \in \algebra{A}$ with $a \nleq b$. Then as $\algebra{A}$ is representable, there certainly exists some realisable $\alpha \in \algebra{A}$ having an edge in its future on which $a$ holds, but $b$ does not. Since the futures of $\alpha$-vertices in $N$ are isomorphic to the futures of $\alpha$-vertices in representations, it suffices to show that for every $\lesssim$-equivalence class $E$, a nonzero number of copies of the present of $E$ are added at the appropriate stage of the construction. But this is obvious, by \Cref{lem-ex}.
\end{proof}

With a little more work we can expand the list of operations to include opposite.

\begin{theorem}\label{opposite}
The finite representation property holds for representation by partial functions for any signature $\sigma$ with $\{\compo, \bmeet, \D, \R\} \subseteq \sigma \subseteq \{\compo, \bmeet, \D, \R, 0, \id, \A, \F, \pref, \maxit, \phantom{}^{-1}\}$.
\end{theorem}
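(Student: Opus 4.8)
The plan is to reduce to \Cref{main}. Let $\algebra{A}$ be a finite $\sigma$-representable algebra, and let $\sigma_0 = \sigma \setminus \{\phantom{}^{-1}\}$. Every $\sigma$-representation is in particular a $\sigma_0$-representation of the $\sigma_0$-reduct, and by \Cref{unique_futures} the isomorphism type of the future --- hence of the present --- of an $\alpha$-vertex is the same whether computed in $\sigma_0$- or in $\sigma$-representations; so we may run the construction from the proof of \Cref{main} on the $\sigma_0$-reduct of $\algebra{A}$ while drawing every present and future from $\sigma$-representations of $\algebra{A}$. That proof then shows the resulting finite network $N$ represents the $\sigma_0$-reduct, and moreover that the future of every vertex of $N$ is isomorphic, by a reflexive-edge-label-preserving isomorphism, to the future of a vertex of a $\sigma$-representation of $\algebra{A}$. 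It remains only to check the condition of \Cref{lemma:network} asserting that $\phantom{}^{-1}$ is represented correctly, namely: for all $a \in \algebra{A}$ and vertices $x, y$ of $N$, $N(y,x) \le a^{-1}$ if and only if $N(x,y) \le a$ and $x$ is the unique vertex $z$ with $N(z,y) \le a$. (If $\abs{\algebra{A}} = 1$ then $\algebra{A}$ is trivially representable on a base of size at most one, so assume $\abs{\algebra{A}} \ge 2$.)

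The difficulty is that whether $a^{-1}$ holds on an edge at a vertex $y$ depends on how many incoming $a$-edges $y$ has, and incoming edges, unlike outgoing ones, are not controlled by the future of $y$. I would argue by a dichotomy: for a realisable domain element $\beta$ and an element $a$, call $(\beta, a)$ \emph{split} if some $\beta$-vertex of some $\sigma$-representation has an $a$-preimage that does not lie in its present. In the non-split case I would use the analysis of $\lesssim$ and of depth from \Cref{section:unique}, together with the fact that the future of each vertex of $N$ matches that of a vertex of a $\sigma$-representation, to show that in $N$ likewise no $\beta$-vertex has an $a$-preimage outside its present; so every $a$-preimage of a $\beta$-vertex $y$ of $N$ lies in the present of $y$, which by \Cref{corollary:present} and the construction is isomorphic --- by a label-preserving isomorphism taking $y$ to a $\beta$-vertex $y'$ of a $\sigma$-representation --- to the present of $y'$. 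Transporting the relevant edges across this isomorphism and using that the $\sigma$-representation represents $\phantom{}^{-1}$ correctly then yields the required equivalence at $y$.

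In the split case I would show that both sides of the equivalence are always false. On the semantic side, an $a$-preimage of a $\beta$-vertex $y'$ of a $\sigma$-representation that lies outside the present of $y'$ cannot be the \emph{unique} $a$-preimage of $y'$, since a unique preimage $z$ would force an edge from $y'$ to $z$ and hence place $z$ in the present of $y'$; so that $\beta$-vertex has at least two $a$-preimages. If some $\beta$-vertex $y''$ of some $\sigma$-representation had a unique $a$-preimage $x''$, then the $a^{-1}$-edge out of $y''$ (which has label $\le a^{-1}$ and domain $\beta$) would give $\beta \le \D(a^{-1})$, and $\D(a^{-1}) = a^{-1} \compo a$ in every algebra of partial functions (both equal the set of reflexive pairs at points with a unique $a$-preimage), so every $\beta$-vertex of every $\sigma$-representation would have a unique $a$-preimage --- contradicting the previous sentence. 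Hence, when $(\beta,a)$ is split, every $\beta$-vertex of every $\sigma$-representation has at least two $a$-preimages, so $a^{-1}$ holds on no edge at a $\beta$-vertex there, and by future-matching $a^{-1}$ likewise holds on no edge at a $\beta$-vertex of $N$ (so the left-hand side fails). On the constructed side, using the ``moreover'' clause of \Cref{lem-ex} I would embed the future of a witnessing out-of-present preimage into $N$ so that its $\beta$-vertex is sent to any prescribed $\beta$-vertex of $N$, showing every $\beta$-vertex of $N$ has an $a$-preimage outside its present; and since the construction replicates every allowable choice $\abs{\algebra{A}}$ times, every such $\beta$-vertex in fact has at least $\abs{\algebra{A}} \ge 2$ $a$-preimages (so the right-hand side also fails). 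The equivalence then holds vacuously.

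I expect the main obstacle to be the semantic half of the split case --- ruling out that $(\beta, a)$ is split while some $\beta$-vertex in some other $\sigma$-representation has a unique $a$-preimage. The identity $\D(a^{-1}) = a^{-1}\compo a$ is exactly what makes this work, and the bookkeeping that ties ``lying outside the present'' to the $\lesssim$-depth at which a vertex enters the construction (needed for the $\abs{\algebra{A}}$-fold count) is the delicate point.
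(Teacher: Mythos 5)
Your proposal is correct, and it is organised genuinely differently from the paper's proof, although it draws on the same three ingredients: the $\abs{\algebra{A}}$-fold replication of allowable choices, the embedding lemma (\Cref{lem-ex}), and the present-isomorphism of \Cref{corollary:present}. The paper proves the two directions of the biconditional head-on. For the forward direction it uses a purely local argument: if $a^{-1}$ holds on $(y,x)$ while $a$ holds on both $(x,y)$ and $(x',y)$, then $x$, $x'$ and $y$ all lie in the future of $x'$, and matching that future with a representation forces $x = x'$. For the converse it first shows that the source of the unique incoming $a$-edge must be $\lesssim$-equivalent to its target (otherwise replication would yield $\abs{\algebra{A}}$ preimages), then transfers the uniqueness claim into a representation, ruling out preimages from other $\lesssim$-classes by embedding their futures into $N$ --- exactly the move you make in your split case. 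Your split/non-split dichotomy repackages this: the non-split transport subsumes the paper's in-class uniqueness step, while the split case replaces the paper's local forward argument with a global semantic one, for which you need the extra observation that $\beta \leq \D(a^{-1})$ would force \emph{every} $\beta$-vertex of \emph{every} representation to have a unique $a$-preimage (the paper never needs this, since its future-matching settles that direction in one step; note also that your identity $\D(a^{-1}) = a^{-1}\compo a$ is not actually required --- correctness of $\D$ applied to $a^{-1}$ already gives what you use). The delicate point you flag is real but goes through: an edge of $N$ between vertices whose reflexive labels are $\lesssim$-equivalent must lie inside a single copy of a present, because a copy added at stage $n$ sends edges only into $N_{n-1}$, so an out-of-present preimage in $N$ forces a strict drop in $\lesssim$ and hence, via future-matching, witnesses splitness in a representation. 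What your version buys is an explicit statement of the invariant governing when $a^{-1}$ can hold at a $\beta$-vertex at all; what the paper's buys is a shorter forward direction with no case split.
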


\begin{proof}
Let $\algebra{A}$ be a finite representable algebra of one of the specified signatures. We may assume that $\algebra{A}$ has more than one element, as the one-element algebra is representable using the empty set as a base. We will argue that if the signature contains opposite, then the network $N$ described in the proof of \Cref{main} represents opposite correctly.

Suppose first that $a^{-1}$ holds on $(y, x)$. We want to show that $(x, y)$ is the unique edge ending at $y$ on which $a$ holds. As the future of $y$ matches the future of some vertex in a representation, we know that $a$ holds on $(x, y)$. To show that $(x, y)$ is the unique such edge, suppose $a$ holds on $(x', y)$. Then $x$, $x'$ and $y$ are all in the future of $x'$. So in the future of $x'$ we have: $a^{-1}$ holding on $(y, x)$ and $a$ holding on $(x, y)$ and $(x', y)$. As the future of $x'$ matches the future of some vertex in a representation, it follows that $x = x'$, as required.

Conversely, suppose that $(x, y)$ is the unique edge ending at $y$ on which $a$ holds. We want to show that $a^{-1}$ holds on $(y, x)$. Let $\alpha$ be the label of the reflexive edge at $x$, let $\beta$ be the label of the reflexive edge at $y$ and let $b$ be the label of $(x, y)$. First note that if $\alpha$ were in a deeper $\lesssim$-equivalence class than $\beta$, then, because of the way $N$ is constructed, there would be at least $|\algebra{A}|$ edges ending at $y$ on which $a$ holds. Hence $\alpha$ and $\beta$ are in the same $\lesssim$-equivalence class.

Now the present of $x$ is isomorphic to the present of any $\alpha$-vertex in any representation of $\algebra{A}$. So it suffices to show that for an $\alpha$-vertex $x'$ in a representation of $\algebra{A}$, if $(x', y')$ is the $b$-labelled edge from $x'$ to a $\beta$-vertex, then $a^{-1}$ holds on $(y', x')$. Being situated in a representation, we can show this by proving that $(x', y')$ is the unique edge ending at $y'$ on which $a$ holds.

Suppose then that $a$ holds on $(z', y')$ and let $\gamma$ be the label of the reflexive edge at $z'$. Suppose $\gamma \neq \alpha$. We saw, in proving that range is represented correctly, that we can embed the future of $z'$ into $N$ in such a way that $y'$ is mapped to $y$. So there is an edge starting at a $\gamma$-vertex and ending at $y$ on which $a$ holds. But this is a contradiction, as the edge $(x, y)$, starting at an $\alpha$-vertex, is supposed to be the unique edge ending at $y$ on which $a$ holds. We conclude that $\gamma = \alpha$ and hence $z'$ is in the present of $y'$, since $\alpha$ and $\beta$ are in the same $\lesssim$-equivalence class. We must now have $x' = z'$, for otherwise the present of $x'$ would feature two distinct edges ending at $y'$ on which $a$ holds. We know this not to be the case, by comparison with $y$, in the present of $x$. Hence $(x', y')$ is the unique edge ending at $y'$ on which $a$ holds, as required.
\end{proof}

Given some representation $\theta$ of an algebra $\algebra{A}$, we could give an alternate definition of the realisable elements of $\algebra{A}$ as those appearing as edge labels in \emph{the particular representation $\theta$}, rather than just in any representation. Then our proofs of \Cref{main} and \Cref{opposite} would work equally well. However, with the definition we gave, the constructed representation is in a sense the richest possible, in that if it is possible for an element to appear as a label in a representation, then it appears as a label in the constructed representation.

It is clear that from the proof of \Cref{main} we can extract a bound on the size required for the base.

\begin{proposition}\label{bound}
For any signature $\sigma$ with $\{\compo, \bmeet, \D, \R\} \subseteq \sigma \subseteq \{\compo, \bmeet, \D, \R, 0, \id, \A, \F, \linebreak\pref, \maxit, \phantom{}^{-1} \}$ every finite $\sigma$-algebra $\algebra{A}$ is representable over a base of size
\Large\[\abs{\algebra{A}}^{\abs{\algebra{A}}^{O(\abs{\algebra{A}}^\frac{1}{2})}}\text{.}\]
\end{proposition}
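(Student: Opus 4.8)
The plan is to instrument the construction of the network $N$ in the proof of \Cref{main}, tracking its size $s_n\coloneqq\abs{N_n}$ stage by stage, and then to observe that the number of stages is far smaller than the trivial bound. At stage $n\ge 1$ we process each $\lesssim$-equivalence class $E$ of depth $n$: for every allowable choice of edges from a copy $P$ of the present of $E$ into $N_{n-1}$ we add $\abs{\algebra{A}}$ copies of $P$ and of those edges. An allowable choice is determined by the images of those vertices of the future of $E$ that lie outside its present, since once these are fixed every remaining edge is forced by the fact that relations are functions; and the future of $E$ has at most $\abs{\algebra{A}}$ vertices, because in a representation, sending a vertex $y$ in the future of an $\alpha$-vertex $x$ to the label $N(x,y)$ of the edge $(x,y)$ is injective and takes values in $\{a\in\algebra{A}\mid\D(a)=\alpha\}$, as in the proof of \Cref{unique_futures}. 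Hence there are at most $s_{n-1}^{\abs{\algebra{A}}}$ allowable choices, a present has at most $\abs{\algebra{A}}$ vertices, and there are at most $\abs{\algebra{A}}$ classes of depth $n$, so
\[
s_n \;\le\; s_{n-1} + \abs{\algebra{A}}\cdot s_{n-1}^{\abs{\algebra{A}}}\cdot\abs{\algebra{A}}\cdot\abs{\algebra{A}}\;\le\; s_{n-1}^{\,\abs{\algebra{A}}+O(1)} .
\]
Since the future of a $\lesssim$-maximal class equals its present, only the empty choice is available at stage $0$ and $s_0\le\abs{\algebra{A}}^2$; iterating over stages $0,1,\dots,M$, where $M$ is the maximum depth of a $\lesssim$-equivalence class, gives
\[
s_M \;\le\; s_0^{\,(\abs{\algebra{A}}+O(1))^M} \;=\; \abs{\algebra{A}}^{\abs{\algebra{A}}^{O(M)}} .
\]

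The crucial remaining point is that $M=O(\abs{\algebra{A}}^{1/2})$. Let $E_0<E_1<\dots<E_M$ be a strictly increasing chain of $\lesssim$-equivalence classes of realisable domain elements, pick $\alpha_i\in E_i$, and, using that $E_{i-1}<E_i$, choose $a_i\in\algebra{A}$ with $\D(a_i)=\alpha_{i-1}$ and $\R(a_i)=\alpha_i$. For $1\le i\le j\le M$ put $b_{i,j}\coloneqq a_i\compo a_{i+1}\compo\dots\compo a_j\in\algebra{A}$. As $\algebra{A}$ is representable, $\D(a\compo b)=\D(a)$ and $\R(a\compo b)=\R(b)$ hold whenever $\R(a)=\D(b)$; applying this along the chain yields $\D(b_{i,j})=\alpha_{i-1}$ and $\R(b_{i,j})=\alpha_j$. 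The $\alpha_i$ lie in distinct $\lesssim$-classes, hence are pairwise distinct, so the pairs $(\alpha_{i-1},\alpha_j)$ are pairwise distinct and therefore so are the $\binom{M+1}{2}$ elements $b_{i,j}$. Thus $\binom{M+1}{2}\le\abs{\algebra{A}}$, which gives $M=O(\abs{\algebra{A}}^{1/2})$; substituting into the display above yields the bound $\abs{\algebra{A}}^{\abs{\algebra{A}}^{O(\abs{\algebra{A}}^{1/2})}}$. Since \Cref{opposite} represents opposite correctly on the same network $N$, the bound holds for every signature in the stated range.

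I expect the bookkeeping in the recursion to be routine though a little fiddly — one must check carefully that an allowable choice really is pinned down by finitely many vertex images and that $s_0$ and the several polynomial-in-$\abs{\algebra{A}}$ factors are all absorbed cleanly. The one step calling for an idea beyond the proof of \Cref{main}, and the reason the top exponent comes out as $\abs{\algebra{A}}^{1/2}$ rather than a larger power, is the depth estimate $M=O(\abs{\algebra{A}}^{1/2})$.
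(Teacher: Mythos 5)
Your proposal is correct and follows essentially the same route as the paper: the same recursion $\abs{N_n}\le\abs{N_{n-1}}+\mathrm{poly}(\abs{\algebra{A}})\cdot\abs{N_{n-1}}^{\abs{\algebra{A}}}$ obtained by bounding the allowable choices by $\abs{N_{n-1}}^{\abs{\algebra{A}}}$ (the paper pins a choice down by the edges leaving a single vertex of $P$ via claim \eqref{eq-comp}, you by the images of the at most $\abs{\algebra{A}}$ vertices of the future outside the present --- equivalent bookkeeping), followed by the same depth estimate $M=O(\abs{\algebra{A}}^{1/2})$. Your construction of the $\binom{M+1}{2}$ pairwise distinct elements $b_{i,j}$ is exactly the justification the paper compresses into the one-line remark that a chain of depth $n$ forces at least $n(n+1)/2$ distinct elements.
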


\begin{proof}
We may assume $\abs{\algebra{A}} \geq 2$. Let $N$ and $(N_n)_{n\geq-1}$ be as in the proofs of \Cref{main} and \Cref{opposite}. Let $E$ be a $\lesssim$-equivalence class of depth $n$ and let $P$ be a copy of the present of $E$. An allowable choice from $P$ to $N_{n-1}$ is determined by the labelled edges from a single vertex of $P$, since it follows from claim \eqref{eq-comp} in the proof of \Cref{unique_futures} that if $a$ is the label of an edge $(x, y)$ and $b$ is the label of an edge $(y, z)$ then $a \compo b$ is the label of the edge $(x, z)$. There are at most $\abs{\algebra{A}}$ labels, so at most $\abs{N_{n-1}}^{\abs{\algebra{A}}}$ allowable choices (unless $E$ is of depth $0$, in which case there is a single allowable choice). When $N_n$ is constructed from $N_{n-1}$, for each allowable choice, $\abs{\algebra{A}}$ copies of $P$ are added, so $\abs{\algebra{A}}\abs{P}$ vertices are added. The sum, over all $\lesssim$-equivalence classes of depth $n$, of the number of vertices in the present of each class, is at most $\abs{\algebra{A}}$. Hence at most $\abs{\algebra{A}}^2\abs{N_{n-1}}^{\abs{\algebra{A}}}$ vertices are added when $N_n$ is constructed from $N_{n-1}$. We obtain
\[
\begin{split}
\abs{N_{0}} &\leq \abs{\algebra{A}}^2\text{,} \\
\abs{N_n} &\leq \abs{N_{n-1}} + \abs{\algebra{A}}^2\abs{N_{n-1}}^{\abs{\algebra{A}}}\text{ for }n\geq1\text{,}
\end{split}
\]
from which it is provable by induction that
\[\abs{N_n} \leq \abs{\algebra{A}}^{2(n+1)\abs{\algebra{A}}^n}\text{.}\]
At least $n(n+1)/2$ distinct elements of $\algebra{A}$ are required in order for there to be a $\lesssim$-equivalence class of depth $n$, since composition is in the signature. So the construction of $N$ is completed by a depth that is $O(\abs{\algebra{A}}^\frac{1}{2})$. Hence
\Large\[\abs{N} = \abs{\algebra{A}}^{2(O(\abs{\algebra{A}}^\frac{1}{2})+1)\abs{\algebra{A}}^{O(\abs{\algebra{A}}^\frac{1}{2})}} = \abs{\algebra{A}}^{\abs{\algebra{A}}^{O(\abs{\algebra{A}}^\frac{1}{2})}}\text{.}\qedhere\]
\end{proof}

For comparison, note that in \cite{hirsch}, whenever a signature is shown to have the finite representation property, a bound on the size required for the base is derived that has either polynomial or exponential asymptotic growth.

We mentioned in the introduction that proving the finite representation property can help show that representability of finite algebras is decidable. The most direct way this can happen is by finding a (computable) bound on the size required for a representation. Then the representability of a finite algebra can be decided by searching for an isomorph amongst the concrete algebras with bases no larger than the bound. 

For most of the signatures that we have considered, decidability has already been established, because finite equational or quasiequational axiomatisations of the representation classes (or at least the finite representable algebras) are known. However this is not the case for some of our signatures. Specifically, the antidomain-free expansions of $\{\compo, \bmeet, \D, \R\}$ by $\maxit$ and/or $\pref$ and also any of the signatures containing opposite. So it is worth stating the following corollary of \Cref{bound}.

\begin{corollary}
Representability of finite algebras by partial functions is decidable for any signature $\sigma$ with $\{\compo, \bmeet, \D, \R\} \subseteq \sigma \subseteq \{\compo, \bmeet, \D, \R, 0, \id, \A, \F, \pref, \maxit, \phantom{}^{-1} \}$.
\end{corollary}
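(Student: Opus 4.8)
The plan is to turn the explicit bound of \Cref{bound} into a terminating search. Given a finite $\sigma$-algebra $\algebra{A}$, first compute the number $B$ supplied by \Cref{bound}; since $B$ is given by an explicit closed form in $\abs{\algebra{A}}$ (an iterated exponential of fixed height), it is a manifestly computable, indeed elementary, function of $\abs{\algebra{A}}$. By \Cref{bound}, $\algebra{A}$ is representable by partial functions if and only if it has a representation on some base $X$ with $\abs{X} \le B$.

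Next I would observe that for each fixed finite set $X$ there are only finitely many $\sigma$-algebras of partial functions whose functions are partial functions on $X$: such an algebra is determined by its universe, which is a subset of the finite set of partial functions on $X$. Given a candidate subset, one can effectively decide whether it is closed under the set-theoretic operations named by $\sigma$---this amounts to computing finitely many compositions, intersections, domains, ranges, and so on, of subsets of $X \times X$---and hence whether it is the universe of a $\sigma$-algebra of partial functions. One can likewise decide whether such a concrete algebra $\algebra{B}$ is isomorphic to $\algebra{A}$: this is a finite search over the bijections between the two universes, each tested for being a homomorphism by finitely many evaluations of the operations, and for the purposes of isomorphism it is enough to enumerate only the universes of size $\abs{\algebra{A}}$.

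Assembling these pieces yields the decision procedure: enumerate all sets $X$ with $\abs{X} \le B$ and all $\sigma$-algebras of partial functions on such an $X$, and test each for isomorphism with $\algebra{A}$; report that $\algebra{A}$ is representable if an isomorph is found, and that it is not once the (finite) list is exhausted. Correctness is immediate from \Cref{bound}: if $\algebra{A}$ is representable then one of the enumerated concrete algebras is an isomorphic copy of it, and if none is then $\algebra{A}$ admits no representation on any base, finite or infinite. I do not expect a genuine obstacle here, as all the mathematical content sits in \Cref{bound}; the only point needing a moment's care is that the bound be not merely finite but \emph{computably} finite, which is transparent from its closed form, after which the argument is the standard one that a class cut out by a computable size bound is decidable by exhaustive search.
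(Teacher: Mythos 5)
Your proposal is correct and is essentially the argument the paper intends: the text immediately preceding the corollary describes exactly this procedure of taking the computable bound from \Cref{bound} and searching for an isomorph among the concrete $\sigma$-algebras of partial functions on bases no larger than that bound. The only refinement you add is spelling out the routine decidability of the enumeration and isomorphism checks, which the paper leaves implicit.
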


\section{Entirely Algebraic Constructions}

Most of the construction detailed in \Cref{section:frp} can be carried out based only on direct inspection of the algebra under consideration. However we noted that the construction does depend in one respect on information contained in representations of the algebra: the representations determine which are the realisable domain elements. We also noted that our construction works equally well if our realisable elements are those appearing as edge labels in one particular representation. So if we were to give an algebraic characterisation of the elements appearing as reflexive-edge labels in a particular representation, we would have a method of constructing a representation on a finite base using only algebraic properties of the algebra. Giving such characterisations, for certain signatures, is precisely what we do in this section.

We first mention the signature $\{\compo, \bmeet, \D, \R\}$ and expansions of this signature by operations in $\{0, \id, \F\}$. The representation that Jackson and Stokes give in \cite{1182.20058} for these signatures uses for the base of the representation Schein's `permissible sequences', as originally described in \cite{schein}. A permissible sequence, is a sequence $(a_1, b_1, \ldots, a_n, b_n, a_{n+1})$ with $\R(a_i) = \R(b_i)$ and $\D(b_i) = \D(b_{i+1})$ for each $i$ (and $0$ cannot participate in a sequence if it is in the signature). There is an edge on which $c$ holds, starting at such a sequence, if and only if $\D(c) \geq \R(a_{n+1})$. Hence for Schein's representation we can identify the elements labelling reflexive edges quite easily: they are those of the form $\R(a)$, for some $a$ (excluding $0$ if it is in the signature).

Now we examine the signature $\{\compo, \bmeet, \A, \R\}$. An arbitrary representable $\{\compo, \bmeet, \A, \linebreak\R\}$-algebra, $\algebra{A}$, has a least element, $0$, given by $\A(a) \compo a$ for any $a \in \algebra{A}$ and any representation of $\algebra{A}$ must represent $0$ by the empty set. We can define $\D \coloneqq \A^2$ and in any representation this must be represented by the domain operation.

The down-set of any element $a \in \algebra{A}$ forms a Boolean algebra using the meet operation of $\algebra{A}$ and with complementation given by $\compl{b} \coloneqq \A(b) \compo a$. Any representation $\theta$ of $\algebra{A}$ by partial functions restricts to a representation of each $\down a$ as a field of sets over $\theta(a)$. From this we see that $\theta$ turns any finite joins in $\algebra{A}$ into unions.

\begin{definition}
Let $\algebra{P}$ be a poset with a least element, $0$. An \defn{atom} of $\algebra{P}$ is a minimal nonzero element of $\algebra{P}$. We say that $\algebra{P}$ is \defn{atomic} if every nonzero element is greater than or equal to an atom.
\end{definition}

A \emph{finite} representable $\{\compo, \bmeet, \A, \R\}$-algebra, $\algebra{A}$, is necessarily atomic. Any $a \in \algebra{A}$ can be expressed as a finite join of atoms of $\algebra{A}$ since, given $0 < a < b$, we can split $b$ as $b = a \bjoin (\A(a) \compo b)$.

From the preceeding discussion, we see that in any representation of a $\{\compo, \bmeet, \A, \R\}$-algebra, for any edge a finite sum of atoms holds, so at least one of the atoms holds, as finite joins are represented by unions. We know that at most one atom holds, since the meet of two distinct atoms is $0$, which can never hold on an edge. Hence a unique atom holds on each edge and necessarily labels the edge. In every representation every atom must appear as a label, otherwise it is not separated from $0$. We conclude that in any representation the elements appearing as edge labels are precisely the atoms and so the elements labelling reflexive edges are precisely the atomic domain elements. Hence for the signature $\{\compo, \bmeet, \A, \R\}$ the realisable domain elements are the atomic domain elements. This also applies to any expansion of this signature by operations we have mentioned.

The purpose of the next example is simply to illustrate that, unlike Boolean algebras for example, the set of atoms in a finite representable $\{\compo, \bmeet, \A, \R\}$-algebra can be almost as large as the algebra itself. Hence applying the knowledge that the number of labels is at most the number of atoms to the calculation in \Cref{bound}, does not improve the bound.

\begin{example}
Let $G$ be any finite group. We can make $G \cup \{0\}$ into an algebra of the signature $\{\compo, \bmeet, \A, \R\}$ by using the group operation for composition (and $g \compo 0 = 0 \compo g = 0$ for all $g$) and defining $g \bmeet h = 0$ unless $g = h$, every antidomain of a nonzero element to be $0$ (and $\A(0) = e$, the group identity) and every range of a nonzero element to be $e$ (and $\R(0) = 0$). Then every nonzero element of $G \cup \{0\}$ is an atom. Augmenting the Cayley representation of $G$ (the representation $\theta(g)(h)= hg$) by setting $\theta(0) = \emptyset$ demonstrates that $G \cup \{0\}$ is representable.
\end{example}

\section{Failure of the Finite Representation Property}

Finally, one might reasonably wonder if it is possible for the finite representation property \emph{not} to hold for algebras of partial functions. After all, for every signature for which it has been settled, the finite representation property has been shown to hold. We finish with a simple example showing that we can indeed force a finite representable algebra of partial factions to fail to have representations over finite bases.

\begin{example}\label{example}
Let $\U$ be the unary operation on partial functions given by
\[\U(f) = \{(y, y) \in X^2 \mid \exists! x \in X : (x, y) \in f\}\text{.}\]
Let $\algebra{F}$ be the algebra of partial functions, of the signature $\{\compo, \bmeet, \D, \R, \U\}$ and with base $\omega \times 2$, containing the following five elements.
\begin{itemize}
\item
$0$, the empty function,
\item
$d$, the identity function on $\omega \times \{0\}$,
\item
$r$, the identity function on $\omega \times \{1\}$,
\item
$f$, the function with domain $d$ and range $r$ sending each $(n, 0)$ to $(n, 1)$,
\item
$g$, a function with domain $d$ and range $r$ such that each $(n, 1) \in \omega \times \{1\}$ has precisely two $g$-preimages: the least two elements of $\omega \times \{0\}$ that are neither the $f$-preimage $(n, 0)$ nor $g$-preimages of $(m, 1)$ for $m < n$. See \Cref{fig:ex}.
\end{itemize}

\begin{figure}[H]
\centering
\begin{tikzpicture}
\foreach \x in {0,1,...,5}
{
\draw[dashed] (2*\x,0)--(2*\x,2);
\draw[->, dashed](2*\x,0)--(2*\x,.85);
\draw[fill] (2*\x,0) circle [radius=0.05];
\draw[fill] (2*\x,2) circle [radius=0.05];
\node [below] at (2*\x, -0.15) {$\x$};
\node [above] at (2*\x, 2.1) {$\x$};
}
\draw (0,0)--(2,2);
\draw[->](0,0)--(.85,.85);
\draw(.85,.85)--(2,2);
\draw (2,0)--(0,2);
\draw[->](2,0)--(1.15,.85);
\draw(1.15,.85)--(0,2);
\draw (4,0)--(0,2);
\draw[->](4,0)--(2.3,.85);
\draw(2.3,.85)--(0,2);
\draw (6,0)--(2,2);
\draw[->](6,0)--(4.3,.85);
\draw(4.3,.85)--(2,2);
\draw (10,0)--(4,2);
\draw[->](10,0)--(7.45,.85);
\draw(7.45,.85)--(4,2);
\draw (8,0)--(4,2);
\draw[->](8,0)--(6.3,.85);
\draw(6.3,.85)--(4,2);
\draw (10.25,1.75/3)--(6,2);
\draw[->](10.25,1.75/3)--(9.45,.85);
\draw(9.45,.85)--(6,2);
\draw (10.25,3.75/4)--(6,2);
\draw (10.25,11.5/8)--(8,2);
\draw (10.25,15.5/10)--(8,2);
\draw (10.25,19.5/10)--(10,2);
\draw (10.25,23.5/12)--(10,2);
\draw[fill] (10.5,1) circle [radius=0.02];
\draw[fill] (11,1) circle [radius=0.02];
\draw[fill] (11.5,1) circle [radius=0.02];
\end{tikzpicture}
\caption{The algebra $\algebra{F}$. Dashed lines for $f$, solid lines for $g$.}\label{fig:ex}
\end{figure}
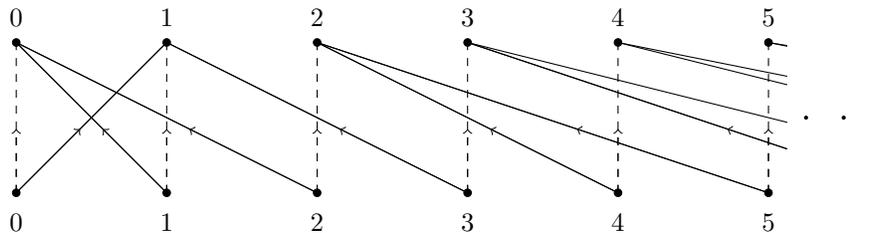

Since $\algebra{F}$ \emph{is} an algebra of partial functions, it is certainly representable by partial functions. It is easy to see that $\algebra{F}$ cannot be represented over a finite base. Indeed, $\R(f) = \U(f)$, so in any representation $f$ is a bijection from its domain, the $d$-vertices, to its range, the $r$-vertices. On the other hand, $\R(g) \neq \U(g)$ so $g$ maps the $d$-vertices onto the $r$-vertices, but not injectively. Hence these sets of vertices cannot have finite cardinality.
\end{example}

By including the operation $\U$ in less expressive signatures, it is possible to give slightly simpler examples than \Cref{example}. However we chose an expansion of the signature $\{\compo, \bmeet, \D, \R\}$ in order to contrast with the other expansions that are the subject of this paper, for which we have seen that the finite representation property does hold.

Note that our example allows us to observe the finite representation property behaving non monotonically as a function of expressivity. Indeed $\U$ is expressible in terms of domain and opposite, $\U(f) = \D(f^{-1})$, and so we have
\[
\{\compo, \bmeet, \D, \R\} \subset \{\compo, \bmeet, \D, \R, \U\} \subset \{\compo, \bmeet, \D, \R, \phantom{}^{-1}, \U\}
\]
with the finite representation property holding for the outer two signatures, but failing in the middle.

\bibliographystyle{amsplain}

\bibliography{finite_representation_property}

\end{document}